\newtheorem{thm}{Theorem}[section]
\newtheorem{cor}[thm]{Corollary}
\newtheorem{lem}[thm]{Lemma}
\theoremstyle{definition}
\theoremstyle{remark}
\newtheorem{rem}[thm]{Remark}
\numberwithin{equation}{section}
\begin{document}

\title[]{some other algebraic properties of folded hypercubes  }%
\author{\bf S. Morteza. MIRAFZAL }%
\address{Department of Mathematics, University of Isfahan, Isfahan 81746-73441, Iran}%
\email{smortezamirafzal@yahoo.com}%
\email{}%
%\thanks{}%
%\subjclass{}%
%\keywords{}%

%\date{}%
%\dedicatory{}%
%\commby{}%

% ----------------------------------------------------------------
\begin{abstract} We construct explicity  the automorphism group of
the folded hypercube $FQ_n$ of dimension $n>3$,
as a semidirect product of $N$ by $M$, where $N$ is isomorphic to
the Abelian group $Z_2^n$ , and $M$ is isomorphic to $Sym(n+1)$, the
symmetric group of degree $n+1$,  then we will show that the folded
hypercube $FQ_n$ is a symmetric graph. \

\

{\bf Keywords }: Hypercube; 4-cycle; Linear extension ; Permutation
 group; Semidirect product; Symmetric graph \

 AMS Subject Classifications: 05C25; 94C15
\end{abstract} \maketitle%

% ----------------------------------------------------------------
\section{\bf Introduction and Preliminaries} A folded hypercube is an
edge transitive graph, this fact is the main result that has been
shown in $[8]$. In this note, we  construct explicity the
automorphism group of a folded hypercube, then we will show that a
folded hypercube is not only an edge transitive graph, but also a
symmetric graph. In this paper, a graph $G=(V,E)$ is considered as
an undirected graph where $V=V(G)$ is the vertex-set and $E=E(G)$ is
the edge-set. For all the terminology and notation not defined here,
we follow $[2,3,5]$. The hypercube $Q_n$ of dimension $n$ is the
graph with vertex-set $\{(x_1,x_2,...,x_n)| x_i \in \{0,1\}\}$, two
vertices $(x_1,x_2,...,x_n)$ and $(y_1,y_2,...,y_n)$ are adjacent if
and only if $x_i = y_i$ for all but one $i$. The folded hypercube
$FQ_n$ of dimension $n$, proposed first in $[1]$, is a graph
obtained from the hypercube $Q_n$ by adding an edge, called a
complementary edge, between any two vertices $ x=
(x_1,x_2,...,x_n)$, $y= (\bar{x_1},\bar{x_2},...,\bar{x_n})$, where
$\bar{1}=0$ and $\bar{0}=1$. The graphs shown in Fig. 1,  are the
folded hypercubes $ FQ_3$ and $FQ_4$.  The graphs $\Gamma_1 =
(V_1,E_1)$ and $\Gamma_2 = (V_2,E_2)$ are called isomorphic, if
there is a bijection $\alpha : V_1 \longrightarrow V_2 $   such
that, $\{a,b\} \in E_1$ if and only if $\{\alpha(a),\alpha(b)\} \in
E_2$ for all $a,b \in V_1$. In such a case the bijection $\alpha$ is
called an isomorphism. An automorphism of a graph $\Gamma $ is an
isomorphism of $\Gamma $ with itself. The set of automorphisms of
$\Gamma$, with the operation of composition of functions, is a
group, called the automorphism group of $\Gamma$ and denoted by $
Aut(\Gamma)$. A permutation of a set is a bijection of it with
itself.  The group of all permutations of a set $V$ is denoted by
$Sym(V)$, or just $Sym(n)$ when $\mid V \mid =n $. A permutation
group $G$ on $V$ is a subgroup of $Sym(V)$. In this case we say that
$G$ acts on $V$. If $\Gamma$ is a graph with vertex-set $V$, then we
can view each automorphism as a permutation of $V$, so $Aut(\Gamma)$
is a permutation group. Let $G$ acts on $V$, we say that $G$ is
transitive ( or $G$ acts transitively on $V$ ) if there is just one
orbit. This means that given any two elements $u$ and $v$ of $V$,
there is an element $ \beta $ of  $G$ such that $\beta (u)= v $.

The graph $\Gamma$ is called vertex transitive if  $Aut(\Gamma)$
acts transitively on $V(\Gamma)$. For $v\in V(\Gamma)$ and
$G=Aut(\Gamma)$, the stabilizer subgroup $G_v$ is the subgroup of
$G$ containing all automorphisms which fix $v$. In the vertex
transitive case all stabilizer subgroups $G_v $ are conjugate in
$G$, and consequently isomorphic, in this case, the index of $G_v$
in $G$ is given by the equation,  $ | G : G_v | =\frac{| G |}{| G_v
|} =| V(\Gamma)| $. If each stabilizer $ G_v $ is the identity
group, then every element of $G$, except the identity, does not fix
any vertex, and we say that $G$ acts semiregularly on $V$. We say
that $G$ acts regularly on $V$ if and only if $G$ acts transitively
and semiregularly on $ V$
 and in this case we
have $\mid V \mid = \mid G \mid$. The action of $Aut(\Gamma)$ on
$V(\Gamma)$ induces an action on $E(\Gamma)$  by the rule
$\beta\{x,y\}=\{\beta(x)$, $\beta(y)\}, \beta\in Aut(\Gamma)$, and
$\Gamma$ is called edge transitive if this action is transitive.The
graph $\Gamma$ is called symmetric, if for all vertices $u, v, x,
y,$ of $\Gamma$ such that $u$ and $v$ are adjacent, and $x$ and $y$
are adjacent, there is an automorphism $\alpha$ such that
$\alpha(u)=x,   and, \alpha(v)=y$. It is clear that a symmetric
graph is vertex transitive and edge transitive.

Let $G$ be any abstract finite group with identity $1$, and suppose
that $\Omega$ is a set of generators of $G$, with the properties :

(i) $x\in \Omega \Longrightarrow x^{-1} \in \Omega; (ii) 1\notin
\Omega $ ;

The Cayley graph $\Gamma=\Gamma (G, \Omega )$ is the graph whose
vertex-set and edge-set defined as follows : $V(\Gamma) = G  ;
E(\Gamma)=\{\{g,h\}\mid g^{-1}h\in \Omega \}$. \

It can be shown that the hypercube $Q_n$ is the Cayley graph
$\Gamma(Z_2^n,B)$, where $B=\{e_1,e_2,...,e_n\}$, $e_i$ is the
element of $Z_2^n$ with $1$ in the $i-th$ position and $0$ in the
other positions for, $1 \leq i \leq n$. Also, the folded hypercube
$FQ_n$ is the Cayley graph $\Gamma(Z_2^n,S)$, where $S=B\cup
\{u=e_1+e_2+...+e_n\}$. Hence the hypercube $Q_n$ and the folded
hypercube $FQ_n$ are vertex transitive graphs. Since $Q_n$ is
 Hamiltonian $[6]$ and  a spanning subgraph of $FQ_n$, so $FQ_n$ is Hamiltonian. Some
properties of the folded hypercube $FQ_n$ are discussed in
$[6,7,8]$. \

The group $G$ is called a semidirect product of $ N $ by $Q$,
denoted by $ G = N \rtimes Q $,
 if $G$ contains subgroups $ N $ and $ Q $ such that, (i)$
N \unlhd G $ ($N$ is a normal subgroup of $G$ ); (ii) $ NQ = G $;
(iii) $N \cap Q =\{1\} $. \ \

\

\

\

\

\

 \begin{figure}[h]
\def\emline#1#2#3#4#5#6{%
\put(#1,#2){\special{em:moveto}}%
\put(#4,#5){\special{em:lineto}}}
\def\newpic#1{}
%
%\pagestyle{empty}
%
%\begin{document}
%
\unitlength 0.3mm
\special{em:linewidth 0.4pt}
\linethickness{0.4pt}
\begin{picture}(-40,-40)(130,40)

%
%Vertices
\put(-23,59){\circle*{2}} \put(-22,-79){\circle*{2}}
\put(247,60){\circle*{2}} \put(245,-79){\circle*{2}}
\put(73,43){\circle*{2}} \put(158,43){\circle*{2}}
\put(35,0){\circle*{2}} \put(36,-26){\circle*{2}}
\put(193,0){\circle*{2}} \put(192,-28){\circle*{2}}
\put(-2,31){\circle*{2}} \put(-3,-55){\circle*{2}}
\put(224,30){\circle*{2}} \put(225,-49){\circle*{2}}
\put(17,127){\circle*{2}} \put(60,73){\circle*{2}}
\put(168,73){\circle*{2}} \put(211,129){\circle*{2}}
\put(72,93){\circle*{2}} \put(159,92){\circle*{2}}
\put(40,111){\circle*{2}} \put(188,111){\circle*{2}}
\put(73,-64){\circle*{2}} \put(157,-63){\circle*{2}}
%Edge No:
% 1
\emline{-23}{59}{1}{-22}{-79}{2}
%Edge No:
% 2
\emline{-23}{59}{1}{247}{60}{2}
%Edge No:
% 3
\emline{-23}{59}{1}{73}{43}{2}
%Edge No:
% 4
\emline{-23}{59}{1}{192}{-28}{2}
%Edge No:
% 5
\emline{-23}{59}{1}{-2}{31}{2}
%Edge No:
% 6
\emline{-22}{-79}{1}{245}{-79}{2}
%Edge No:
% 7
\emline{-22}{-79}{1}{193}{0}{2}
%Edge No:
% 8
\emline{-22}{-79}{1}{-3}{-55}{2}
%Edge No:
% 9
\emline{-22}{-79}{1}{73}{-64}{2}
%Edge No:
% 10
\emline{247}{60}{1}{245}{-79}{2}
%Edge No:
% 11
\emline{247}{60}{1}{158}{43}{2}
%Edge No:
% 12
\emline{247}{60}{1}{36}{-26}{2}
%Edge No:
% 13
\emline{247}{60}{1}{224}{30}{2}
%Edge No:
% 14
\emline{245}{-79}{1}{35}{0}{2}
%Edge No:
% 15
\emline{245}{-79}{1}{225}{-49}{2}
%Edge No:
% 16
\emline{245}{-79}{1}{157}{-63}{2}
%Edge No:
% 17
\emline{73}{43}{1}{158}{43}{2}
%Edge No:
% 18
\emline{73}{43}{1}{35}{0}{2}
%Edge No:
% 19
\emline{73}{43}{1}{225}{-49}{2}
%Edge No:
% 20
\emline{73}{43}{1}{73}{-64}{2}
%Edge No:
% 21
\emline{158}{43}{1}{193}{0}{2}
%Edge No:
% 22
\emline{158}{43}{1}{-3}{-55}{2}
%Edge No:
% 23
\emline{158}{43}{1}{157}{-63}{2}
%Edge No:
% 24
\emline{35}{0}{1}{36}{-26}{2}
%Edge No:
% 25
\emline{35}{0}{1}{193}{0}{2}
%Edge No:
% 26
\emline{35}{0}{1}{-2}{31}{2}
%Edge No:
% 27
\emline{36}{-26}{1}{192}{-28}{2}
%Edge No:
% 28
\emline{36}{-26}{1}{-3}{-55}{2}
%Edge No:
% 29
\emline{36}{-26}{1}{73}{-64}{2}
%Edge No:
% 30
\emline{193}{0}{1}{192}{-28}{2}
%Edge No:
% 31
\emline{193}{0}{1}{224}{30}{2}
%Edge No:
% 32
\emline{192}{-28}{1}{225}{-49}{2}
%Edge No:
% 33
\emline{192}{-28}{1}{157}{-63}{2}
%Edge No:
% 34
\emline{-2}{31}{1}{-3}{-55}{2}
%Edge No:
% 35
\emline{-2}{31}{1}{224}{30}{2}
%Edge No:
% 36
\emline{-2}{31}{1}{157}{-63}{2}
%Edge No:
% 37
\emline{-3}{-55}{1}{225}{-49}{2}
%Edge No:
% 38
\emline{224}{30}{1}{225}{-49}{2}
%Edge No:
% 39
\emline{224}{30}{1}{73}{-64}{2}
%Edge No:
% 40
\emline{17}{127}{1}{60}{73}{2}
%Edge No:
% 41
\emline{17}{127}{1}{211}{129}{2}
%Edge No:
% 42
\emline{17}{127}{1}{159}{92}{2}
%Edge No:
% 43
\emline{17}{127}{1}{40}{111}{2}
%Edge No:
% 44
\emline{60}{73}{1}{168}{73}{2}
%Edge No:
% 45
\emline{60}{73}{1}{72}{93}{2}
%Edge No:
% 46
\emline{60}{73}{1}{188}{111}{2}
%Edge No:
% 47
\emline{168}{73}{1}{211}{129}{2}
%Edge No:
% 48
\emline{168}{73}{1}{159}{92}{2}
%Edge No:
% 49
\emline{168}{73}{1}{40}{111}{2}
%Edge No:
% 50
\emline{211}{129}{1}{72}{93}{2}
%Edge No:
% 51
\emline{211}{129}{1}{188}{111}{2}
%Edge No:
% 52
\emline{72}{93}{1}{159}{92}{2}
%Edge No:
% 53
\emline{72}{93}{1}{40}{111}{2}
%Edge No:
% 54
\emline{159}{92}{1}{188}{111}{2}
%Edge No:
% 55
\emline{40}{111}{1}{188}{111}{2}
%Edge No:
% 56
\emline{73}{-64}{1}{157}{-63}{2}
%Labeling
\put(192,136){\makebox(0, 0)[cc]{001}} \put(165,114){\makebox(0,
0)[cc]{110}} \put(38,104){\makebox(0, 0)[cc]{111}}
\put(222,68){\makebox(0, 0)[cc]{1101}} \put(-19,65){\makebox(0,
0)[cc]{0010}} \put(152,52){\makebox(0, 0)[cc]{1100}}
\put(63,54){\makebox(0, 0)[cc]{0011}} \put(248,-67){\makebox(0,
0)[cc]{1001}} \put(0,40){\makebox(0, 0)[cc]{0000}}
\put(102,121){\makebox(0, 0)[cc]{FQ$_ 3$}} \put(1,126){\makebox(0,
0)[cc]{000}} \put(156,-69){\makebox(0, 0)[cc]{1000}}
\put(164,93){\makebox(0, 0)[cc]{100}} \put(70,88){\makebox(0,
0)[cc]{101}} \put(176,74){\makebox(0, 0)[cc]{011}}
\put(41,74){\makebox(0, 0)[cc]{010}} \put(65,-69){\makebox(0,
0)[cc]{0111}} \put(197,-22){\makebox(0, 0)[cc]{1010}}
\put(199,2){\makebox(0, 0)[cc]{1110}} \put(228,-42){\makebox(0,
0)[cc]{1011}} \put(226,23){\makebox(0, 0)[cc]{1111}}
\put(107,-9){\makebox(0, 0)[cc]{FQ$_ 4$}} \put(16,-23){\makebox(0,
0)[cc]{0101}} \put(-10,-68){\makebox(0, 0)[cc]{0110}}
\put(-21,-48){\makebox(0, 0)[cc]{0100}} \put(12,0){\makebox(0,
0)[cc]{0001}}

\end{picture}

\end{figure} \

\

 \

 \

\

\

 \ \ \ \ \ \ \ \ \ \ \ \ \ \ \ \ \ \ \ \  Fig. 1. The folded hypercubes $FQ_3$ and
 $FQ_4$.

\section{\bf Main results}
\begin{lem}\label{1} If $n \neq 3 $, then every 2-path in $FQ_n$  is contained in  a unique
4-cycle.

\end{lem}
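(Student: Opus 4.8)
The plan is to work entirely inside the Cayley-graph model $FQ_n=\mathrm{Cay}(Z_2^n,S)$ with $S=\{e_1,\dots ,e_n,u\}$, $u=e_1+\cdots +e_n$, and to reduce the assertion to a statement about pairwise sums of the connection set $S$. Since $Z_2^n$ is abelian and every translation $x\mapsto x+g$ is an automorphism of the Cayley graph, I would first translate the middle vertex of an arbitrary $2$-path to $0$. As $S$ is the neighbourhood of $0$ and is closed under inverses (every element of $Z_2^n$ has order dividing $2$), it suffices to show: for any two distinct $s,t\in S$, the $2$-path $s-0-t$ lies in exactly one $4$-cycle.

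Next I would set up the correspondence between $4$-cycles through $s-0-t$ and their ``fourth vertices''. A $4$-cycle through this $2$-path is exactly a $4$-cycle $0-s-w-t-0$ where $w$ is adjacent to both $s$ and $t$; adjacency to $s$ (resp.\ $t$) means $w+s\in S$ (resp.\ $w+t\in S$), while $w\ne s$, $w\ne t$ hold automatically since there are no loops and $0\notin S$, and $w\ne 0$ is the only extra condition needed for the four vertices to be distinct. The vertex $w=s+t$ always works, giving the $4$-cycle $0,s,s+t,t$, so only uniqueness is at issue. Writing $a=w+s$ and $b=w+t$, the admissible $w$ correspond bijectively to ordered pairs $(a,b)\in S\times S$ with $a+b=s+t$ and $a\ne s$; and since $s\ne t$ forces $s+t\ne 0$, we have $a\ne b$. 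Hence the $4$-cycle is unique if and only if $\{s,t\}$ is the only $2$-element subset of $S$ whose sum equals $s+t$.

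Thus the lemma reduces to the claim that the $\binom{n+1}{2}$ sums $x+y$ over distinct pairs $x,y\in S$ are pairwise distinct (i.e.\ $S$ is a Sidon set in $Z_2^n$). I would prove this by a short weight count: a sum $e_i+e_j$ with $i\ne j$ has Hamming weight $2$ and is determined by the pair $\{i,j\}$, whereas a sum $e_i+u$ is the all-ones vector with the $i$-th coordinate switched to $0$, hence has weight $n-1$ and is determined by $i$. So the sums within each of the two types are mutually distinct, and a type-one sum can coincide with a type-two sum only if $2=n-1$, i.e.\ $n=3$. Consequently, for $n\ne 3$ all $\binom{n+1}{2}$ pairwise sums are distinct, and by the reduction above every $2$-path of $FQ_n$ lies in a unique $4$-cycle.

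The only step needing a little care is the correspondence in the second paragraph: one must check that distinct $4$-cycles through the $2$-path give distinct fourth vertices $w$ and that each admissible $w$ genuinely closes up a $4$-cycle on four distinct vertices, so that counting $w$'s really counts $4$-cycles. Everything else is elementary. It is also worth noting that the argument shows $n=3$ is a true exception, not an artefact of the method: there $e_1+u=e_2+e_3$ (and cyclically), which is precisely the failure of the Sidon property and yields extra $4$-cycles — consistent with $FQ_3\cong K_{4,4}$, in which every $2$-path lies in three $4$-cycles.
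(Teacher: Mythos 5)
Your proof is correct, and it takes a more structural route than the paper's. The paper argues directly on coordinates: it splits into two cases according to whether the $2$-path $uvw$ consists of two hypercube edges or of one hypercube edge and one complementary edge, and in each case exhibits the unique second common neighbour of $u$ and $w$, asserting (rather than proving) that no others exist. You instead use the translations of the Cayley graph to normalize the middle vertex to $0$ and convert the whole question into the statement that $S=\{e_1,\dots,e_n,u\}$ is a Sidon set in $Z_2^n$, which your Hamming-weight count verifies exactly when $n\neq 3$. This buys three things: the two cases collapse into one uniform argument; the completeness of the list of common neighbours is actually established (the one thin spot in the paper's write-up); and the exceptional role of $n=3$ is explained rather than merely observed, since the unique failure $e_i+u=e_j+e_k$ of the Sidon property accounts precisely for the three $4$-cycles through each $2$-path of $FQ_3\cong K_{4,4}$. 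The one step that deserves the care you flag is the passage from ordered to unordered pairs: the subset $\{s,t\}$ contributes the two ordered solutions $(s,t)$ and $(t,s)$ of $a+b=s+t$, of which $(s,t)$ is the excluded $w=0$ and $(t,s)$ is the genuine $4$-cycle $0,s,s+t,t$, while any further $2$-subset of $S$ with the same sum would contribute two additional $4$-cycles; so uniqueness is indeed equivalent to the Sidon condition, and the check goes through.
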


\begin{proof}\ If  $n=2$, then it is trivial that the assertion of the Lemma
is true, so let $n>3$.  Let $P: uvw$ be a 2-path in $FQ_n$. If  $
u=(x_1,...,\bar{x_i},...,x_n)$, $v=(x_1,...,x_i,...,x_n)$,
$w=(x_1,...,\bar{x_j},...,x_n)$, then only vertex
$x=(x_1,...,x_{i-1}$, $\bar{x_i},...,x_{j-1},\bar{x_j},...,x_n)$ and
$v$ are adjacent to both vertices $u$ and $w$. Hence the 4-cycle $C:
uvwx $ is the unique 4-cycle that contains the 2-path $P$. If
$u=(\bar{x_1},\bar{x_2},...,\bar{x_n})$, $v=(x_1,...,x_i, ...,x_n)$,
$w=(x_1,...,x_{j-1},\bar{x_j},x_{j+1},...,x_n)$, then only vertices
$x=(\bar{x_1},...,\bar{x_{j-1}},x_j,\bar{x_{j+1}},...,\bar{x_n})$
and $v$ are adjacent to both vertices $u$ and $w$.

\end{proof}

   In the folded hypercube $FQ_3$ any 2-path is contained in 3
4-cycles, hence the assertion of Lemma 2.1 is not true in $FQ_3$.\

 \begin{rem}\ For a graph $\Gamma$ and $v\in V(\Gamma)$, let $N(v)$ be
the set of vertices $w$ of $\Gamma$ such that $w$ is adjacent to
$v$. Let $ G= Aut( \Gamma)$, then $G_v$ acts on $ N(v)$,  if we
restrict the domains of the permutations $g \in G_v $ to $ N(v)$.
Let $ L_v $ be the set of all elements $ g $ of $G_v$ such that $ g
$ fixes  each element of $ N(v)$. Let $ Y = N(v) $ and $ \Phi : G_v
\longrightarrow Sym(Y)$ be defined by the rule, $ \Phi(g) = g_{\mid
Y}$ for any element $g$ in $G_v$ , where $ g_{\mid Y}$ is the
restriction of $ g $ to $Y$. In fact $ \Phi $ is a  group
homomorphism and $ ker(\Phi) = L_v $,  thus $ G_v/L_v$ and the
 subgroup $ \phi(G_v)$ of $ Sym(Y) $ are isomorphic. If $\mid Y \mid = deg(v) = k$, then $
\mid G_v \mid / \mid L_v \mid \leq k! $. \

\end{rem}

\begin{lem}\label{1} If $n>3$ and $G=Aut(FQ_n)$, then
 $ | G | \leq (n+1)!2^n$

\end{lem}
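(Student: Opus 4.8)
The plan is to pass from $G$ to a single vertex-stabilizer and then use Lemma 2.1 to pin that stabilizer down. Since $FQ_n=\Gamma(Z_2^n,S)$ with $|S|=n+1$, the graph is vertex transitive and $(n+1)$-regular, so by the orbit-stabilizer relation recalled in the Introduction, $|G|=|V(FQ_n)|\,|G_v|=2^n|G_v|$ for any vertex $v$. By the Remark, $|G_v|/|L_v|\leq(n+1)!$, where $L_v\leq G_v$ is the subgroup fixing $v$ together with every neighbour of $v$. Hence $|G|=2^n|L_v|\bigl(|G_v|/|L_v|\bigr)\leq(n+1)!\,2^n\,|L_v|$, and it suffices to prove $L_v=\{1\}$.

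To that end I would fix $g\in L_v$ and show that $g$ fixes every vertex. The main tool, available because $n>3$, is the following consequence of Lemma 2.1: if an automorphism fixes all three vertices of a $2$-path, then it also fixes the fourth vertex of the unique $4$-cycle through that $2$-path, because it carries that $4$-cycle to a $4$-cycle through the same $2$-path, which by uniqueness is the same cycle. Using this I claim $g\in L_w$ for every $w\in N(v)$. Given such a $w$ and any $z\in N(w)$ with $z\neq v$, apply the tool to the $2$-path $zwv$: it produces a vertex $x$ adjacent to both $z$ and $v$, namely the fourth vertex of the unique $4$-cycle $zwvx$, and $g$ fixes $x$ because $x\in N(v)$. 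Now apply the tool again to the $2$-path $wvx$, whose unique $4$-cycle is $wvxz$ (since $z$ is adjacent to both $w$ and $x$); as $g$ fixes $w,v,x$ it must fix $z$. Since $z$ was an arbitrary neighbour of $w$ other than $v$, and $g$ already fixes $v$ and $w$, this shows $g\in L_w$.

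Finally I would propagate. Since $FQ_n$ is connected (it has the connected graph $Q_n$ as a spanning subgraph), any vertex $t$ is joined to $v$ by a path $v=v_0,v_1,\dots,v_k=t$, and the previous step gives $g\in L_{v_i}\Rightarrow g\in L_{v_{i+1}}$ for each $i$, so by induction $g\in L_t$ and in particular $g(t)=t$. Thus $g$ is the identity, $L_v=\{1\}$, and the bound $|G|\leq(n+1)!\,2^n$ follows.

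The step I expect to demand the most care is the two successive applications of the $4$-cycle argument: one must check that the triples $z,w,v$ and $w,v,x$ genuinely are $2$-paths (distinctness of the vertices and the stated adjacencies) and that $zwvx$ and $wvxz$ really are $4$-cycles. This is precisely where the hypothesis $n>3$ is used, via the uniqueness clause of Lemma 2.1; for $n=3$ the scheme breaks down, since each $2$-path of $FQ_3$ lies in three $4$-cycles rather than one.
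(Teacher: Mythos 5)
Your proposal is correct and follows essentially the same route as the paper: reduce to showing $L_v=\{1\}$ via the Remark and vertex-transitivity, then use the uniqueness of the $4$-cycle through a $2$-path (Lemma 2.1) to propagate the fixing of $N(v)$ outward to all vertices. Your version merely packages the induction as ``$g\in L_{v_i}\Rightarrow g\in L_{v_{i+1}}$ along a path'' instead of the paper's induction on distance from $v$, which is a cosmetic difference (and arguably a slightly cleaner statement of the inductive step).
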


\begin{proof}\ Let $ v \in V(FQ_n) $ and $ L_v $ be the subgroup which is defined in the
 above, we show that $ L_v = \{ 1\} $.  Let $g \in L_v $ and
$w$ be an arbitrary vertex of $FQ_n$. If the distance of $w$ from
$v$ is $1$, then $w$ is in $N(v)$, so $g(w)=w$. Let the distance of
$w$ from $v$ be $2$. Then there is a vertex $u$ such that $P : vuw $
is a 2-path, hence by Lemma 2.1.  there is a 4-cycle that contains
this 2-path, thus there is a vertex $t$ such that $C : tvuw $ is a
4-cycle. Since $ t \in L_v $, then $g(t)=t$, so $ g(C) : tvug(w)$ is
a 4-cycle. By Lemma 2.1 the 2-path $P_1 : tvu $ is contained in a
unique 4-cycle, thus $g(C) = C $, therefore $g(w) = w $. The set $S$
is a generating set for the Abelian group $Z_2^n$, so the Cayley
graph $FQ_n = \Gamma(Z_2^n, S) $ is a connected graph. Now, by
induction on the distance $w$ from $v$, it follows that $g(w)=w$, so
$g = 1 $ and $L_v=\{ 1\}$. Now,  by the Remark 2.2.  ,  $ | G_v |
\leq | L_v | (n+1)! \leq (n+1)!$. \

The folded hypercube $FQ_n$ is a vertex transitive graph, hence $ |
G |= | G_v | | V(FQ_n)| \leq (n+1)!2^n$.

\end{proof}

\begin{thm} \ If $n>3$ , then $Aut(FQ_n)$ is a semidirect
product of $N$ by $M$, where $N$ is isomorphic to the Abelian group
$Z_2^n$  and $M$ is isomorphic to the  group $Sym(n+1)$.

\end{thm}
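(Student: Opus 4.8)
The plan is to identify the two subgroups $N$ and $M$ of $G = Aut(FQ_n)$ explicitly, verify they satisfy the three conditions in the definition of semidirect product, and pin down their isomorphism types.

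First I would construct $N$. Since $FQ_n = \Gamma(Z_2^n, S)$ is a Cayley graph on the abelian group $Z_2^n$, the right-translations $\rho_a : x \mapsto x + a$ for $a \in Z_2^n$ are automorphisms of $FQ_n$, and they form a subgroup $N \cong Z_2^n$ which acts regularly on $V(FQ_n)$. In particular $|N| = 2^n$. Next I would construct $M$ as a subgroup of the vertex stabilizer $G_v$ at the identity vertex $v = (0,\ldots,0)$. The neighbours of $v$ are exactly the $n+1$ elements of $S = \{e_1, \ldots, e_n, u\}$ with $u = e_1 + \cdots + e_n$. By Lemma 2.3 (the $L_v = \{1\}$ argument), the restriction homomorphism $\Phi : G_v \to Sym(N(v))$ is injective, so $|G_v| \le (n+1)!$. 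To get equality — and hence the subgroup $M$ — I would exhibit, for each permutation $\pi$ of the index set $\{1, \ldots, n, n+1\}$ (identifying $e_i$ with $i$ and $u$ with $n+1$), an automorphism of $FQ_n$ fixing $v$ and permuting $S$ accordingly. The standard coordinate permutations give the subgroup $Sym(n) \le G_v$ fixing $u$; the extra generator is the "linear extension" map suggested by the keywords: the linear map $\sigma$ of $Z_2^n$ that sends $e_1 \mapsto u$, $u \mapsto e_1$, and fixes $e_2, \ldots, e_n$ — more explicitly, $\sigma(x_1, \ldots, x_n) = (x_1 + x_2 + \cdots + x_n,\, x_2,\, \ldots,\, x_n)$ or a similar affine/linear recipe — and one checks it preserves adjacency (it permutes $S$, hence is a graph automorphism since $FQ_n$ is a Cayley graph and $\sigma$ is a group automorphism of $Z_2^n$ fixing $S$ setwise). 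Together these generate a copy of $Sym(n+1)$ inside $G_v$, and since $|G_v| \le (n+1)!$ by Lemma 2.3, we conclude $G_v = M \cong Sym(n+1)$.

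With $N$ and $M$ in hand, I would verify the three axioms. Condition (iii), $N \cap M = \{1\}$: an element of $N$ other than the identity moves $v$, while every element of $M$ fixes $v$, so the intersection is trivial. Condition (ii), $NM = G$: we have $|N||M| = 2^n (n+1)! \ge |G|$ by Lemma 2.3, and $|NM| = |N||M|/|N \cap M| = 2^n(n+1)!$, so $NM = G$ on cardinality grounds (combined with $|G| \ge |N||M|$ which also follows since $NM \subseteq G$). Condition (i), $N \unlhd G$: this is the one genuinely substantive point. The cleanest route is to show that $N$ is precisely the set of automorphisms $g$ with the property that every automorphism conjugate structure... more concretely, one shows $N$ is normalized by both $M$ and $N$ itself (it is abelian, so self-normalizing in the trivial sense), hence by $G = NM$. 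To see $M$ normalizes $N$: for $\mu \in M$ and $a \in Z_2^n$, one computes $\mu \rho_a \mu^{-1}$; since $\mu$ fixes $v = 0$ and (as constructed) $\mu$ is realized by a group automorphism $\theta$ of $Z_2^n$, one gets $\mu \rho_a \mu^{-1} = \rho_{\theta(a)} \in N$. One must check that \emph{every} element of $M$, not just the generators, arises from a group automorphism of $Z_2^n$ fixing $0$; this follows because the generators do and the group-automorphism property is closed under composition and inverse.

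The main obstacle I anticipate is the explicit construction and adjacency-verification of the exceptional automorphism $\sigma$ realizing the transposition that swaps $u$ with some $e_i$ — i.e., showing that the linear map permuting the generating set $S$ is well-defined on $Z_2^n$ and indeed sends $S$ bijectively to $S$ (the "linear extension" in the title). Once that single automorphism is available, the rest is a routine combination of order estimates from Lemma 2.3 and the regular action of $N$; in particular the normality of $N$ reduces to the observation that every element of the stabilizer $M$ acts on $Z_2^n$ as a linear automorphism fixing $S$ setwise, which conjugates translations to translations. I would also remark that this simultaneously yields the symmetry of $FQ_n$: $N$ acts transitively on vertices and $M = G_v$ acts transitively on $N(v)$ (since it induces all of $Sym(n+1)$ there), and transitivity on vertices together with transitivity of a stabilizer on the neighbour set of that vertex is exactly the condition for $FQ_n$ to be a symmetric (arc-transitive) graph.
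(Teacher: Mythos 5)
Your proposal is correct and follows essentially the same route as the paper: both construct $N$ as the translations of the Cayley graph and $M$ as linear automorphisms of the vector space $Z_2^n$ permuting $S$ setwise (the paper takes all $(n+1)!$ linear extensions of bijections from $B$ onto $n$-subsets of $S$, while you generate the same group from coordinate permutations plus one extra swap), then combine the order count with Lemma 2.3 and prove normality of $N$ by conjugating a translation by a linear map. One small slip: your explicit coordinate formula $\sigma(x_1,\ldots,x_n)=(x_1+\cdots+x_n,\,x_2,\ldots,x_n)$ fixes $e_1$ and does not preserve $S$; the map you describe verbally (send $e_1\mapsto u$, fix $e_2,\ldots,e_n$, whence $u\mapsto e_1$ by linearity) is the right one and is given by $\sigma(x_1,\ldots,x_n)=(x_1,\,x_1+x_2,\ldots,x_1+x_n)$.
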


\begin{proof}\ Let $Aut(FQ_n) =G$,  $v \in Z_2^n = V(FQ_n)$  and $\rho_v$ be
the mapping $ \rho_v : Z_2^n \longrightarrow Z_2^n $ defined by
$\rho_v(x)= v+x $. Since $FQ_n$ is the Cayley graph $\Gamma (Z_2^n,
S)$, then $\rho_v$ is an automorphism of $FQ_n$ and $N = \{\rho_v |
v \in Z_2^n \}$ is a subgroup of $G$ isomorphic to $Z_2^n$. Note
that the Abelian group $Z_2^n$ is also a vector space over the field
$ F=\{ 0,1 \}$ and $B=\{e_1,e_2,...,e_n\}$ is a basis of this vector
space and any subset of the set $ S= B \cup\{u=e_1+e_2+...+e_n\}$
with $n$ elements is linearly independent over $F$ and is a basis of
the vector space $Z_2^n$. Let $A$ be a subset of $S$ with $n$
elements and $ f : B \longrightarrow A $ be a one to one function.
We can extend $f$ over $Z_2^n$ linearly. Let $\phi$ be the linear
extension of $f$ over $Z_2^n$, thus $\phi$ is a linear mapping of the vector space
$Z_2^n$ into itself such that $\phi_{|B}= f$. Since $B$ and $A$ are bases of the
vector space $Z_2^n$, hence $\phi$ is a permutation of $Z_2^n$. In
fact $\phi$ is an automorphism of $FQ_n$. If $ A = B$, then $ \phi
(u)=\phi(e_1)+ \phi (e_2)+ ... +\phi (e_n)= e_1 + e_2 + ...+ e_n =
u$. If $A \neq B $, then $ u \in A $ and for some $ i,j \in
\{1,2,...,n \}$ we have $\phi (e_i) = u $ and $e_j \notin A$. Then
$\phi (u) = \phi(e_1)+ \phi(e_2)+...+\phi(e_n) = e_1 + e_2 + ...+
e_{j-1} +  e_{j+1} + ...+ e_n + u = u - e_j + u = e_j \in S$. Now,
it follows that $\phi$ maps $S$ into $S$. If $[v,w] \in E(FQ_n)$,
then $w= v+s $ for some $s \in S $, hence $ \phi(w)= \phi(v) +
\phi(s) $, now since $\phi(s) \in S$ we have $[\phi(v),\phi(w)] \in
E(FQ_n)$. For a fixed n-subset $A$ of $S$ there are $n!$ distinct
one to one functions such as $f$, thus there are $n!$ automorphisms
of the folded hypercube $FQ_n$ such as $\phi$. The set $S$ has
$n+1$ elements, so there are $n+1$ n-subset of $S$ such as $A$,
hence there are $(n+1)!$ one to one functions $ f: B\longrightarrow
S $. Let $M = \{\phi : Z_ 2^n \longrightarrow Z_ 2^n\  | \ \phi  $
is a linear extension of a one to one function $f:B \longrightarrow
S \}$.  Then $M$ has $(n+1)!$ elements and any element of $M$ is an
automorphism of $FQ_n$. If $ \alpha \in M$, then $ \alpha $ maps $S$
onto $S$, hence $ \alpha_{|S} $, the restriction of $ \alpha $ to
$S$, is a permutation of $S$. Now it is an easy task to show that
$M$ is isomorphic to the group $Sym(S)$. Every element of $M$
fixes the element $0$, thus $ N \cap M  = \{1\}$, hence $| MN | =
\frac{ | M | | N |}{| N \cap M |} = (2^n)(n+1)!$, therefore $ |
Aut(FQ_n) | \geq (2^n)(n+1)!$. Now, by the Lemma 2.3.  it follows
that $ | Aut(FQ_n)| = (n+1)!2^n$, therefore $Aut(FQ_n) = MN$. \

We show that the subgroup $N$ is a normal subgroup of $Aut(FQ_n)
=G=MN=NM$. It is enough to show that for any $f \in M$ and $g \in
N$,  we have $f^{-1}gf \in N$. There is an element $y \in Z_2^n$
such that $g= \rho_y$. Let $b$ be an arbitrary vertex of $FQ_n$,
then $f^{-1}gf(b)= f^{-1}\rho_yf(b)=f^{-1}(y+f(b))=f^{-1}(y)+ b=
\rho_{f^{-1}(y)}(b)$, hence $f^{-1}gf =\rho_{f^{-1}(y)} \in N $.

\end{proof}

   It is  an easy task to show that the folded hypercube $FQ_3$ is isomorphic to $
  K_{4,4}$, the complete bipartite graph of order 8, so $Aut(FQ_4)$
  is a group with $ 2(4!)^2 = 1152$ elements [2], therefore Theorem 2.3 is not true
  for $n=3$. \

  If $n>1$, then the assertion of Lemma 2.1 is also true for the
  hypercube $Q_n$ and by a similar method that has been seen in the
  proof of Theorem 2.4. we can show that $Aut(Q_n) \cong Z_2^n \rtimes
  Sym(n)$, the result which has been discussed in $[4]$ by a different
  method.

  \begin{thm} \ If $n \geq 2$, then the folded hypercube $ FQ_n $ is
  a symmetric graph.

  \end{thm}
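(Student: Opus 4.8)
The plan is to reduce symmetry of $FQ_n$ to two facts already available: vertex transitivity (from the Cayley graph structure) and the transitive action of the vertex stabilizer $G_v$ on the neighbour set $N(v)$. Recall that a graph $\Gamma$ is symmetric (arc-transitive) precisely when it is vertex transitive and, for one vertex $v$, the stabilizer $G_v$ acts transitively on $N(v)$; given vertices $u,v,x,y$ with $u\sim v$ and $x\sim y$, one first moves $u$ to $x$ by a vertex-transitive automorphism $\beta$, which sends $v$ to some neighbour $\beta(v)$ of $x$, and then composes with an element of $G_x$ carrying $\beta(v)$ to $y$.

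So the real work is to show that for $v\in V(FQ_n)$ the stabilizer $G_v$ is transitive on $N(v)$. By vertex transitivity it suffices to take $v=0$, the identity of $Z_2^n$, whose neighbour set is exactly $S=\{e_1,\dots,e_n,u\}$. Here I would invoke the subgroup $M$ constructed in the proof of Theorem 2.4: every element of $M$ fixes $0$, so $M\le G_0$, and $M$ acts on $S$ as the full symmetric group $Sym(S)$. In particular $M$ (hence $G_0$) acts transitively on $S=N(0)$. That immediately gives arc-transitivity for $n>3$.

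For the remaining small cases $n=2,3$ I would argue separately. For $n=3$ the excerpt notes $FQ_3\cong K_{4,4}$, which is well known to be symmetric (indeed distance-transitive): $Aut(K_{4,4})$ contains $Sym(4)\times Sym(4)$ acting with the stabilizer of a vertex transitive on its four neighbours, plus the part-swapping involution, so it is vertex- and arc-transitive. For $n=2$, $FQ_2$ is obtained from the $4$-cycle $Q_2$ by adding the two complementary edges, giving $K_4$, which is obviously symmetric. Alternatively one can note directly that in all these cases the $\rho_v$ give vertex transitivity and a suitable subgroup of coordinate permutations (for $n=2$, all permutations of the two coordinates together with complementation) already acts transitively on the neighbours of $0$.

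The main obstacle is essentially bookkeeping rather than a genuine difficulty: one must be careful that the neighbour set of $0$ in $FQ_n$ really is all of $S$ (not just $B$), i.e. that the complementary edge is correctly accounted for, and that the action of $M$ on $S$ is literally the full symmetric group — both of which were established in the proof of Theorem 2.4. One should also state cleanly the standard equivalence ``vertex-transitive $+$ stabilizer transitive on neighbours $\iff$ symmetric,'' since the definition of symmetric given in the Introduction is in terms of arbitrary adjacent pairs $u,v$ and $x,y$; translating between the two formulations is the one place where a short argument (the $\beta$ followed by a stabilizer element construction above) is needed.
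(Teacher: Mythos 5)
Your proposal is correct and follows essentially the same route as the paper: vertex transitivity from the Cayley graph structure, plus the subgroup $M$ from Theorem 2.4 acting on $N(0)=S$ as the full symmetric group $Sym(S)$, with the small cases $n=2,3$ handled via $K_4$ and $K_{4,4}$. Your explicit justification of the standard equivalence between "vertex transitive with stabilizer transitive on neighbours" and "symmetric" is a detail the paper leaves implicit, but the argument is the same.
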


  \begin{proof}\ The folded hypercube $FQ_2$ is isomorphic to $K_4$,
  the complete graph of order 4, and the folded hypercube $FQ_3$ is isomorphic to
  $K_{4,4}$, the complete bipartite graph of order 8, both of these
  are
  clearly symmetric. Let $ n \geq 4$. Since The folded hypercube
  $FQ_n$ is a Cayley graph, then it is vertex transitive, now it is
  sufficient to show that for a fixed vertex $v$ of $V(FQ_n)$, $G_v$
  acts transitively on $N(v)$, where $G = Aut(FQ_n) $. As we can
  see in the proof of Theorem 2.3, since each element of $M$ is a
  linear mapping of the vector space  ${Z_2}^n$ over $F = \{0,1 \}$, then for the vertex $ v = 0 $ the
  stabilizer group of $G_v$ is $M$. The restriction of each element
  of $M$ to $N(0)=S $
   is a permutation of $S$. If $f \in M $
  fixes each element of $S$, then $f$ is the identity mapping of
  the vector space ${Z_2}^n$. Since $ |S|= n+1, $ then $Sym(S)$ has
  $(n+1)!$ elements. On the other hand $\bar{M }= \{f_{|S}$ $ |  f \in M
  \}$ has $(n+1)!$ elements, hence $ \bar{M }= Sym(S) = G_0$. We
  know  that $Sym(X)$ acts transitively on $X$, where $X$ is a set,
  so $G_0$ acts transitively on $N(0)$.

   \end{proof}

   \begin{cor}The connectivity of the folded hypercube $FQ_n$ is
   maximum, say $n+1$.

\end{cor}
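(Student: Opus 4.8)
The plan is to deduce the corollary from the edge-transitivity of $FQ_n$ together with its regularity, using the classical theorem (Watkins, and independently Mader) that a finite connected edge-transitive graph has vertex-connectivity equal to its minimum degree. So the whole proof splits into an easy upper bound and a lower bound that is supplied by that theorem. First I would pin down the degree: in the spanning subgraph $Q_n$ every vertex has degree $n$, and in passing to $FQ_n$ each vertex gains exactly one incident edge, namely its complementary edge, so $FQ_n$ is $(n+1)$-regular. Since for any graph on at least two vertices the vertex-connectivity is at most the minimum degree, we immediately get $\kappa(FQ_n)\le n+1$.

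For the matching lower bound I would observe that, since $FQ_n=\Gamma(Z_2^n,S)$ with $S$ a generating set of $Z_2^n$, the folded hypercube is a connected Cayley graph, and by Theorem 2.5 (valid for $n\ge 2$) it is symmetric, hence in particular edge-transitive. Applying the Watkins--Mader theorem gives $\kappa(FQ_n)=\delta(FQ_n)=n+1$. Together with the upper bound this proves $\kappa(FQ_n)=n+1$, the largest value possible for an $(n+1)$-regular graph; in particular the edge-connectivity, which lies between $\kappa$ and $\delta$, equals $n+1$ as well.

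The only real content is the lower bound $\kappa(FQ_n)\ge n+1$, so the main obstacle is one's willingness to cite the connectivity theorem for edge-transitive graphs rather than to prove it. If a self-contained argument is wanted, two routes are available. One is to invoke Menger's theorem directly and exhibit $n+1$ internally disjoint paths from $0$ to an arbitrary vertex $w$; using vertex-transitivity together with the transitive action of $M\cong Sym(n+1)$ on $N(0)=S$, one may assume $w$ lies among a bounded set of orbit representatives indexed by the distance of $w$ from $0$ (which is at most $\lceil n/2\rceil$), so only a few explicit cases need to be handled. The other is to run the standard fragment/atom argument for connectivity of vertex-transitive graphs, showing that a minimum cutset $T$ with $|T|\le n$ would force a component of $FQ_n-T$ too large to coexist with a second component among the $2^n$ vertices. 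I would present the short citation-based argument as the proof of the corollary and note these alternatives in a remark.
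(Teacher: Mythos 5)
Your proposal is correct and follows essentially the same route as the paper: both establish that $FQ_n$ is $(n+1)$-regular, invoke its edge-transitivity (a consequence of Theorem 2.5), and cite the classical theorem that a connected edge-transitive graph has connectivity equal to its minimum valency. The additional remarks on self-contained alternatives are fine but not needed.
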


\begin{proof}\ Since the folded hypercube $FQ_n$ is a symmetric
graph,  then it is edge transitive, on the other hand this graph is
a regular graph of valency $n+1$. We know that the connectivity of a
connected edge transitive graph is equal to its minimum valency [3,
pp.  55].

\end{proof}

The above fact has been rephrased in $[1]$ and has been found in a
different manner. \

\

{\bf ACKNOWLEDGMENT}   \

 The author is grateful to professor Alireza Abdollahi and professor A. Mohammadi Hassanabadi for their  helpful comments and thanks the Center of Excellence for Mathematics, University of Isfahan.

\

\end{document}